\newcommand{\N}{\mathbb{N}}
\newcommand{\Ff}[1]{\mathbb{F}_{#1}}
\newcommand{\polr}[3]{\field{#1}[#2_1,\ldots,#2_{#3}]}
\newcommand{\field}[1]{\mathbbm{#1}}
\newcommand{\hide}[1]{}
\newcommand{\Bb}[2]{\beta_{#1,#2}}
\newcommand{\Bnum}[3]{\beta_{#1,#2}(I_{#3})}
\newcommand{\nnn}{\mathbb{N}_{0}^{n}}
\newcommand{\nn}{\mathbb{N}_{0}}
\newcommand{\n}[1]{\mathbb{N}}
\newcommand{\feit}[1]{\mathbf{#1}}
\newcommand{\elo}[2]{#1_{(#2)}}
\newcommand{\dual}[1]{\overline{#1}}
\theoremstyle{plain}
\newtheorem{theorem}{Theorem}[section]
\newtheorem{lemma}{Lemma}[section]
\newtheorem{proposition}{Proposition}[section]
\newtheorem{corollary}{Corollary}[section]
\theoremstyle{definition}
\newtheorem{definition}{Definition}[section]
\newtheorem{example}{Example}[section]
\theoremstyle{remark}
\DeclareMathOperator{\col}{col} 
\DeclareMathOperator{\Supp}{Supp} 
\DeclareMathOperator{\im}{im} 
\DeclareMathOperator{\sign}{sign}
\begin{document}
\title{A generalization of weight polynomials to matroids\footnote{The original publication is available at http://www.sciencedirect.com/science/article/pii/S0012365X15003568}}
\author{Trygve Johnsen, Jan Roksvold\thanks{Corresponding author. E-mail address: \texttt{jan.n.roksvold@uit.no}}, Hugues Verdure}
\affil{Department of Mathematics and Statistics, UiT The Arctic University of Norway,\\ N-9037 Troms\o, Norway}

\maketitle
\begin{abstract}
\noindent Generalizing polynomials previously studied in the context of linear codes, we define weight polynomials and an enumerator for a matroid $M$. Our main result is that these polynomials are determined by Betti numbers associated with $\nn$-graded minimal free resolutions of the Stanley-Reisner ideals of $M$ and so-called elongations of $M$. Generalizing Greene's theorem from coding theory, we show that the enumerator of a matroid is equivalent to its Tutte polynomial.
\end{abstract}

\hide{\begin{keyword}
Matroid \sep Weight enumerator \sep Linear code \sep Stanley-Reisner ideal \sep Higher weights \sep Tutte polynomial
\end{keyword}}

\section{Introduction}
For a linear $[n,k]$-code $C$ over $\Ff{q}$, let $A_{C,j}$ denote the number of words of weight $j$ in $C$. The Hamming weight enumerator \[W_{C}(X,Y)=\sum_{j=0}^{n}A_{C,j}X^{n-j}Y^{j}\] has important applications in the theory of error-correcting codes, where it amongst other things determines the probability of having an undetected error (see \cite[Proposition 1.12]{JP}). 

Let $M(H)$ denote the vector matroid associated to a parity-check matrix $H$ of $C$. The connection \begin{equation}\label{Greene's}W_{C}(X,Y)=(X-Y)^{n-k}Y^{k}t_{M(H)}\left(\frac{X}{Y},\frac{X+(q-1)Y}{X-Y}\right)\end{equation} between the Hamming weight enumerator of an $\Ff{q}$-code and the specialization of its associated Tutte polynomial on the hyperbola $(x-1)(y-1)=q$ was first established in Greene's paper \cite{Gre}, and we shall therefore refer to equation (\ref{Greene's}) as \emph{Greene's theorem}. 

For $Q$ a power of $q$, the set of all $\Ff{Q}$-linear combinations of words of $C$ is itself a linear code. This code is commonly referred to as the extension of $C$ to $\Ff{Q}$, and is denoted by $C\otimes_{\Ff{q}}\Ff{Q}$. 
In \cite{JP}, it is found that the number $A_{C,j}(Q)$ of words of weight $j$ in $C\otimes_{\Ff{q}}\Ff{Q}$ can be expressed in terms of the initial code $C$, as a polynomial in $Q$. This leads the authors to the definition of an \emph{extended} weight enumerator $W_{C}(X,Y,Q)$ for $C$, with the desired property that \[W_{C}(X,Y,Q)=W_{C\otimes_{\Ff{q}}\Ff{Q}}(X,Y).\] The polynomial $W_{C}(X,Y,Q)$ is then, in turn, shown to be equivalent to the Tutte polynomial of $M(H)$ -- thereby extending Greene's theorem.  

Our primary objective in this article is to demonstrate that the polynomial $A_{C,j}(Q)$ is determined by the Betti numbers associated to $\nn$-graded minimal free resolutions of the Stanley-Reisner ideals of $M(H)$ and its so-called \emph{elongations}. This is intended to serve as one brick in the bridge being built between commutative combinatorial algebra and the theory of linear codes. The result can also be seen as a continuation of the work done in \cite{JV}, where it is demonstrated that the Betti numbers associated to an $\nn$-graded minimal free resolution of the Stanley-Reisner ideal of $M(H)$ determine the higher Hamming weight hierarchy of $C$. 

It seemed natural to begin the pursuit of the above-stated objective by generalizing the polynomial $A_{C,j}(Q)$ to a polynomial $P_{M,j}(Z)$ defined for all matroids, not only those stemming from a linear code, but of course with the property that $A_{C,j}(Q)=P_{M(H),j}(Q)$. Having defined such a polynomial $P_{M,j}(Z)$, it is equally natural to define and investigate a more general \emph{matroidal} enumerator \[W_{M}(X,Y,Z)=\sum_{j=0}^{n}P_{M,j}(Z)X^{n-j}Y^{j}.\] 

Our second objective is to extend Greene's theorem from codes to matroids by way of this matroidal weight enumerator. Since its discovery, Greene's theorem has been generalized, specialized, and extended in several ways. For example, in addition to the already mentioned equivalence between the Tutte polynomial and the extended weight enumerator of a linear code, it was demonstrated in \cite[Theorems 4 and 5]{Bri} and (independently) in \cite[Theorem 3.3.5]{Jur} that the Tutte polynomial and the set of so-called higher weight enumerators of a linear code determine each other as well. Related results and methods can also be found in ~\cite{Bar}, where the connection between the weight enumerator and the Tutte polynomial is used to establish bounds on all-terminal reliability of vectorial matroids. In addition, \cite{Bar} provides new proofs of Greene's theorem itself, and shows how the weight polynomial and the partition polynomial of the Potts model are related. The connection between the weight enumerator 
and the Tutte polynomial is also used in \cite[Corollaries 10, 11 and 12]{Rei} when looking at two-variable coloring formulas for graphs. A generalization of Greene's theorem is given in \cite[Theorem 9.4]{Ver} to latroids, which are useful for studying codes over rings. 

As can be seen in \cite[p.~131]{Du}, the Tutte polynomial of a matroid determines its higher weights. Thus we already know that the polynomials $P_{M,j}$, being equivalent to the Tutte polynomial, must determine the higher weights of $M$ as well -- at least indirectly. We shall see that they do so in a very simple and accessible way. 

\subsection{Structure of this paper}
\begin{itemize} 
\item Section \ref{Secprelim} contains definitions and results used later on. 
\item In Section \ref{Pol} we look at the number of codewords in the extension of a linear code $C$ over $\Ff{q}$ -- as a polynomial in $q^r$.
\item In Section \ref{SecGWP}, we generalize the polynomial from Section \ref{Pol} to matroids, and use these generalized weight polynomials to define a matroidal enumerator. We proceed to demonstrate that this enumerator is \emph{equivalent} to the Tutte polynomial of $M$.
\item In Section \ref{BettiSect} we prove our main result: The generalized weight polynomials are determined by the Betti numbers associated to $\nn$-graded minimal free resolutions of the Stanley-Reisner ideal of $M$ and the elongations of $M$.
\item In Section \ref{Counter} we shall see a counterexample showing that the converse of our main result is not true; the generalized weight enumerators do not determine the $\nn$-graded Betti numbers of the Stanley-Reisner ideal of $M$.
\end{itemize} 
 
\section{Preliminaries}\label{Secprelim}
\subsection{Linear codes and weight enumerators}
A linear $[n,k]$-code $C$ over $\Ff{q}$ is, by definition, a $k$-dimensional subspace of $\Ff{q}^n$. The elements of this subspace are commonly referred to as \emph{words}, and any $k\times n$ matrix whose rows form a basis for $C$ is referred to as a \emph{generator matrix}. Thus a linear code typically has several generator matrices.

The \emph{dual code} is the orthogonal complement of $C$, and is denoted by $C^{\perp}.$ A \emph{parity-check matrix} of $C$ is a $(n-k)\times n$-matrix with the property that \[Hx^{T}=0\Leftrightarrow x\in C.\] It is easy to see that $H$ is a parity check matrix for $C$ if and only if $H$ is a generator matrix for $C^{\perp}$.

\subsection{Puncturing and shortening a linear code}
Let $C$ be a linear code of length $n$, and let $J\subseteq\{1\ldots n\}$.
\begin{definition}
The \emph{puncturing of $C$ in $J$} is the linear code obtained by eliminating the coordinates indexed by $J$ from the words of $C$. \end{definition}
\begin{definition}
\[C(J)=\{w\in C: w_j=0 \text{ for all }j\in J\}.\] 
\end{definition}
Clearly, $C(J)$ is itself a linear code.
\begin{definition}
The \emph{shortening of $C$ in $J$} is the puncturing of $C(J)$ in $J$.
\end{definition}

\subsection{Matroids}There are numerous equivalent ways of defining a matroid. We choose to give here the definition in terms of independent sets. For an introduction to matroid theory in general, we recommend e.g.~\cite{Oxl}.
\begin{definition}A \emph{matroid} $M$ consists of a finite set $E$ and a set $I(M)$ of subsets of $E$ such that:
\begin{itemize} \item $\emptyset \in I(M).$ \item If $I_{1}\in I(M)$ and $I_{2}\subseteq I_{1}$, then $I_{2}\in I(M)$. \item If $I_{1},I_{2}\in I(M)$ and $|I_{1}|>|I_{2}|$, then there is a $x\in I_{1}\smallsetminus I_{2}$ such that $I_{2}\cup x\in I(M)$.\end{itemize} 
\end{definition}
The elements of $I(M)$ are referred to as the \emph{independent sets} (of $M$). The \emph{bases} of $M$ are the independent sets that are not contained in any other independent set. In other words, the \emph{maximal independent sets}. Conversely, given the bases of a matroid, we find the independent sets to be those sets that are contained in a basis. We denote the bases of $M$ by $B(M)$. It is a fundamental result that all bases of a matroid have the same cardinality.

The dual matroid $\dual{M}$ is the matroid on $E$ whose bases are the complements of the bases of $M$. Thus \[B(\dual{M})=\{E\smallsetminus B:B\in B(M)\}.\] 
\begin{definition}For $\sigma\subseteq E$, the rank function $r_{M}$ and nullity function $n_{M}$ are defined by \[r_{M}(\sigma)=\max\{|I|:I \in I(M), I\subseteq \sigma\},\] and \[n_{M}(\sigma)=|\sigma|-r_{M}(\sigma).\]\end{definition}

Whenever the matroid $M$ is clear from the context, we omit the subscript and write simply $r$ and $n$. Note that a subset $\sigma$ of $E$ is independent if and only if $n(\sigma)=0$. The rank $r(M)$ of $M$ itself is defined as $r(M)=r_{M}(E)$. 

We let $\dual{r}$ and $\dual{n}$, respectively, denote the rank- and nullity function of $\dual{M}$, and point out that \begin{equation}\label{dual}\dual{r}(\sigma)=|\sigma|+r(E\smallsetminus \sigma) - r(E).\end{equation}

\begin{definition}
If $\sigma\subseteq E$, then $\{I\subseteq\sigma:I \in I(M)\}$ form the set of independent sets of a matroid $M_{|\sigma}$ on $\sigma$. We refer to $M_{|\sigma}$ as the \emph{restriction of $M$ to $\sigma$}.  
\end{definition}

\begin{definition}\label{di}The \emph{higher weights} $\{d_i\}$ of $M$ are defined by \[d_i=\min\{|\sigma|:\sigma\subseteq E(M) \text{ and } n(\sigma)=i\}.\]
\end{definition}

\begin{definition}\label{Tuttedef}
The \emph{Tutte polynomial} of $M$ is defined by \[t_{M}(X,Y)=\sum_{\sigma\subseteq E}(X-1)^{r(E)-r(\sigma)}(Y-1)^{|\sigma|-r(\sigma)}.\] 
\end{definition} 
It carries information on several invariants of $M$. For example $t_{M}(1,1) $ counts the number of bases of $M$, while $t_{M}(2,1)$ is the number of independent sets.

\begin{definition}Let $f_i$ denote the number of independent sets of cardinality $i$. 
The \emph{reduced Euler characteristic} $\chi(M)$ of $M$ is defined by \[\chi(M)=-1+f_1-f_2+\cdots+(-1)^{r(M)-1}f_{r(M)}.\] 
\end{definition}
It is straightforward to verify that $\chi(M)=(-1)^{r(M)-1}t_{M}(0,1).$

Without any loss of generality we shall throughout this article assume that \[E=\{1,\ldots,n\}.\] Furthermore, we shall frequently identify $\sigma\subseteq E$ with its indicator vector in $\{0,1\}^{n}$ whose $i^{th}$ coordinate is $1$ if and only if $i\in \sigma$. The expression $|\sigma|$ should, however, always be interpreted as the number of elements in $\sigma$, or, equivalently, as the number of elements in the support of the corresponding indicator vector. 
\begin{example}[$U(r,n)$]
The set of all cardinality-$r$ subsets of $E$ form the set of bases for a matroid $U(r,n)$ on $E$. We refer to $U(r,n)$ as the \emph{uniform matroid} of rank $r$ on an $n$-element set. Observe that $I\subseteq E$ is independent in $U(r,n)$ if and only if $|I|\leq r$. 

Clearly, we have $d_i(U(r,n))=r+i,$ for $1\leq i \leq n-r.$ And it is equally clear that \[\chi(U(r,n))=\sum_{i=0}^{r}(-1)^{i+1}\binom{n}{i}.\] 

As for the Tutte polynomial, note that for $\sigma\subseteq E$ with $|\sigma|<r$ we have $|\sigma|-r(\sigma)=0.$ While for those $\sigma$ with $|\sigma|>r$ we have $r(E)-r(\sigma)=0.$ For the $\binom{n}{r}$ subsets $\sigma$ with $|\sigma|=r$, both $|\sigma|-r(\sigma)$ and $r(E)-r(\sigma)$ are equal to $0$. Thus \[t_{U(r,n)}(X,Y)=\sum_{i=0}^{r-1}\binom{n}{i}(X-1)^{r-i}+\binom{n}{r}+\sum_{i=r+1}^{n}\binom{n}{i}(Y-1)^{i-r}.\]
\end{example}

\subsection{From linear code to matroid}
Let $A$ be an $m\times n$ matrix over some field $\field{k}$. Let $E$ be the set of column labels of $A$. It is easy to verify that if we take as independent sets those subsets of $E$ that correspond to a set of $\field{k}$-linearly independent columns, this constitutes a matroid on $E$. We refer to this as the \emph{vector matroid} of $A$ and denote it $M(A).$ 

Note that if $G$ and $G'$ are two generator matrices for the linear code $C$, then $M(G)=M(G')$. The same goes for parity-check matrices, of course. It therefore makes sense to speak of \emph{the} matroid corresponding to a generator (or parity-check) matrix of $C$, and to write $M(G)$ and $M(H)$ without specifying $G$ or $H$. Thus to a linear code $C$, with generator matrix $G$ and parity-check matrix $H$, there naturally correspond two matroids: $M(G)$ and $M(H)$. We shall mostly consider $M(H)$, but this is not very crucial since duality results abound and $M(H)=\dual{M(G)}$. 

Note that $r(M(G))=\dim(C)$, while $r(M(H))=\dim(C^{\perp})$, and that $d_{1}(M(H))$ is equal to the minimum distance of $C$.

\begin{example}\label{RunEx}
Let $C$ be the $[7,4]$-code over $\Ff{5}$ with parity-check matrix \[H= \left( \begin{array}{ccccccc}
1 & 0 & 0 & 3 & 3 & 3 & 4 \\
0 & 1 & 0 & 0 & 2 & 2 & 0\\
0 & 0 & 1 & 4 & 4 & 4 & 4\end{array} \right).\] Then $M(H)$ will be a matroid on $E=\{1,\ldots,7\}$. The columns $\left[\begin{array}{c}1\\0\\0\end{array}\right],\left[\begin{array}{c}0\\0\\1\end{array}\right]$ and $\left[\begin{array}{c}3\\2\\4\end{array}\right]$ form a \emph{maximal} linearly independent set of columns, so $\{1,3,6\}$ is a basis for $M(H)$, and $r(M(H))=3$. The full set of bases is \begin{flalign*}B(M(H))=\big\{&\{ 1, 3, 6 \},\{ 1, 3, 5 \},\{ 1, 2, 6 \},\{ 2, 3, 6 \},\{ 1, 2, 5 \},\{ 1, 5, 7 \},\{ 3, 6, 7 \},\{ 2, 4, 7 \},\\ &\{ 1, 4, 6 \},\{ 2, 3, 4 \},\{ 4, 6, 7 \},\{ 1, 2, 3 \}, \{ 1, 2, 7 \},\{ 3, 4, 5 \},\{ 1, 6, 7 \},\{ 1, 4, 5 \},\\ &\{ 1, 2, 4 \},\{ 2, 3, 7 \},\{ 4, 5, 7 \},\{ 3, 5, 7 \},\{ 2, 6, 7 \},\{ 2, 5, 7 \},\{ 2, 3, 5 \},\{ 3, 4, 6 \}\big\}.\end{flalign*} 
\end{example}

\subsection{The elongation of $M$ to rank $r(M)+i$}\label{Mi}
Let $M$ be a matroid on $E=\{1,\ldots,n\}$. 
\begin{definition}
For $0\leq i\leq n-r(M)$, let $\elo{M}{i}$ be the matroid whose independent sets are given by $I(\elo{M}{i})=\{\sigma\in E: n(\sigma)\leq i\}$. 
\end{definition}
It is not difficult to verify that $\elo{M}{i}$ is indeed a matroid \cite[Section 1.3,~ex.6]{Oxl}. Note that $\elo{M}{0}=M$, and that $B(\elo{M}{n-r(M)})=\{E\}$.

The following is straightforward:\begin{proposition}\label{ni1}
For $\sigma\subseteq E,$ we have \begin{equation}\label{ri}r_{\elo{M}{i}}(\sigma)=\begin{cases}
r(\sigma)+i, & \text{$n(\sigma)>i$.} \\
|\sigma|, & \text{$n(\sigma)\leq i$.} \\
\end{cases}\end{equation}
And \begin{equation}\label{ni}n_{\elo{M}{i}}(\sigma)=\begin{cases}
n(\sigma)-i, & \text{$n(\sigma)>i$.} \\
0, & \text{$n(\sigma)\leq i$.} \\
\end{cases}\end{equation}
\end{proposition}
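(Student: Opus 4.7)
The plan is to prove the rank formula (\ref{ri}) directly from the definition of $M_i$, and then derive the nullity formula (\ref{ni}) as an immediate consequence via $n_i(\sigma) = |\sigma| - r_i(\sigma)$. The argument splits naturally along the dichotomy in the statement: whether $\sigma$ itself is already independent in $M_i$ (the case $n(\sigma)\le i$) or not (the case $n(\sigma)>i$).

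In the easy case $n(\sigma)\le i$, the definition of $I(M_i)$ immediately tells us that $\sigma \in I(M_i)$, so $\sigma$ is its own largest independent subset and $r_i(\sigma)=|\sigma|$. Plugging this into $n_i(\sigma)=|\sigma|-r_i(\sigma)$ gives $0$, matching (\ref{ni}).

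In the remaining case $n(\sigma)>i$, I would first establish the upper bound $r_i(\sigma)\le r(\sigma)+i$: any $\tau\subseteq\sigma$ with $\tau\in I(M_i)$ satisfies $|\tau|-r(\tau)\le i$, and monotonicity of $r$ yields $|\tau|\le r(\tau)+i\le r(\sigma)+i$. For the matching lower bound, I would exhibit an explicit $\tau\subseteq\sigma$ with $|\tau|=r(\sigma)+i$ that lies in $I(M_i)$. Pick a maximal independent subset $B$ of $\sigma$ in $M$, so $|B|=r(\sigma)$, and observe $|\sigma\smallsetminus B|=|\sigma|-r(\sigma)=n(\sigma)>i$, so we may choose $S\subseteq\sigma\smallsetminus B$ with $|S|=i$. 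Setting $\tau=B\cup S$, we have $B\subseteq\tau\subseteq\sigma$, which forces $r(\tau)=r(\sigma)$, hence $n(\tau)=|\tau|-r(\tau)=i$, so $\tau\in I(M_i)$ and $r_i(\sigma)\ge r(\sigma)+i$. Combining bounds gives (\ref{ri}), and (\ref{ni}) then follows from $n_i(\sigma)=|\sigma|-(r(\sigma)+i)=n(\sigma)-i$.

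The only subtle point, and the one worth flagging, is the existence of $i$ extra elements in $\sigma\smallsetminus B$ used to build $\tau$; this is where the hypothesis $n(\sigma)>i$ is actually needed, and it is also the reason the two pieces of the piecewise formula meet consistently at the threshold $n(\sigma)=i$. Beyond that, everything is a direct unwinding of definitions, which is why the proposition is described as ``straightforward'' in the text.
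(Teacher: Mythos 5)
Your proof is correct and follows essentially the same route as the paper's: prove the rank formula (\ref{ri}) by adjoining $i$ elements of $\sigma$ outside a maximal independent subset $B\subseteq\sigma$ to produce a subset of size $r(\sigma)+i$ with nullity exactly $i$, then obtain (\ref{ni}) from $n_i(\sigma)=|\sigma|-r_i(\sigma)$. The only difference is cosmetic: you make the upper bound $r_i(\sigma)\le r(\sigma)+i$ explicit, whereas the paper leaves it implicit in asserting that its constructed set attains the maximum.
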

\hide{\begin{proof}
The equations (\ref{ri}) and (\ref{ni}) are clearly equivalent. We prove (\ref{ri}). 

Let $\sigma\subseteq E$. If $n(\sigma)\leq i$, then $r_i(\sigma)=|\sigma|;$ we therefore assume that $n(\sigma)>i$, and let $I\subseteq \sigma$ with $|I|=r(\sigma).$ Let $J\subseteq \sigma\smallsetminus I$ with $|J|=n(\sigma)-i.$ Clearly, \[|\sigma\smallsetminus J|=\max\{|X|:X\subseteq \sigma, n(X)=i\},\] and \[r(\sigma\smallsetminus J)=|I|=r(\sigma).\] Thus \[r_i(\sigma)=|\sigma\smallsetminus J|=r(\sigma\smallsetminus J)+n(\sigma\smallsetminus J)=r(\sigma)+i.\qedhere\] 
\end{proof}}

By definition we have $r_{i}(\elo{M}{i})=r_{i}(E)$. It thus follows from Proposition \ref{ni1} that 
\begin{equation}\label{rmi}
r_i(\elo{M}{i})=r(M)+i. 
\end{equation}
The matroid $\elo{M}{i}$ is commonly referred to as the \emph{elongation} of $M$ to rank $r(M)+i$.  

If $\sigma\subseteq E$ then the rank function of $M_{|\sigma}$ is the restriction of $r_{M}$ to subsets of $\sigma$. We point out, for later use, that this implies \begin{equation}\label{switch}
(\elo{M}{i})_{|\sigma}=(M_{|\sigma})_{(i)}.
\end{equation}

\subsection{The Stanley-Reisner ideal, Betti numbers, and the reduced chain complex}
Let $M$ be a matroid on $E=\{1,\ldots,n\}$. Let $\field{k}$ be a field.

\begin{definition}
A \emph{circuit} of $M$ is a subset $C$ of $E$ with the property that $C$ is not itself independent, but $C\smallsetminus x$ is independent for every $x\in C$.  
\end{definition}
In other words, the circuits of a matroid are the minimal dependent sets, while the independent sets are precisely those that do not contain a circuit.

Let $S=\polr{k}{x}{n}$.
\begin{definition}[Stanley-Reisner ideal]
Let $I_{M}$ be the ideal in $S$ generated by monomials corresponding to circuits of $M$. That is, let \[I_{M}=\langle x_{j_{1}}x_{j_2}\cdots x_{j_s}:\{j_{1}j_{2},\ldots,j_{s}\} \text{ is a circuit of } M \rangle.\] We refer to $I_{M}$ as the \emph{Stanley-Reisner ideal} of $M$.  
\end{definition}

It is clear that, viewed as an $S$-module, the Stanley-Reisner ideal permits both the standard $\nn$-grading and the standard $\nnn$-grading \cite[Section 6.3]{CLO}. 

\begin{definition}
For $\feit{a}\in \nnn$, let $S(-\feit{a})$ denote the $S$-module obtained by shifting the gradation of $S$, seen as an $\nnn$-graded module, by $\feit{a}$. \end{definition}

\begin{definition}For $j\in\nn$, let $S(-j)$ denote the $S$-module obtained by shifting the gradation of $S$, seen as an $\nn$-graded module, by $j$.  
\end{definition}
Note that $S(-\feit{a})$ is isomorphic to $S\feit{x}^{\feit{a}}$ as an $\nnn$-graded $S$-module, while $S(-j)$ is isomorphic to $\langle x^{j}\rangle_{S}$ as an $\nn$-graded $S$-module.
\begin{definition}An $\nnn$-graded $S$-module $F$ is said to be \emph{free} if it is of the form \[F=S(-\mathbf{a_{1}})\oplus S(-\mathbf{a_{2}})\cdots\oplus S(-\mathbf{a_{r}}),\] for some $\feit{a_1},\feit{a_2},\ldots,\feit{a_r}\in \nnn$.\end{definition}
And likewise:
\begin{definition} An $\nn$-graded $S$-module $F$ is said to be \emph{free} if it is of the form \[F=S(-j_{1})\oplus S(-j_{2})\cdots\oplus S(-j_{r}),\] for some $j_1,j_2,\ldots,j_r\in \nn$.\end{definition} 

\begin{definition}A chain of $S$-modules and $S$-homomorphisms \[\begin{CD}
\cdots@<<<X_{i-1}@<\phi_{i}<<X_{i}@<<<\cdots\end{CD}\] is said to be a \emph{complex} if $\im\phi_{i}\subseteq \ker\phi_{i-1}$ for each $i$. Furthermore, the complex is said to be \emph{minimal} whenever $\im\phi_i\subseteq \langle x_1,x_2,\ldots,x_n\rangle X_{i-1}$.\end{definition}
A complex is said to be \emph{exact at homological degree $i$} if $\im\phi_{i}=\ker\phi_{i-1}$.
Bringing these concepts together, we have:
\begin{definition}\label{minfree}An \emph{$\nnn$-graded minimal free resolution} of an $\nnn$-graded $S$-module $N$ is a minimal left complex
\begin{equation}\label{minres}\begin{CD}
0@<<<F_{0}@<\phi_1<<F_{1}@<\phi_2<<F_{2}@<<<\cdots@<\phi_l<<F_{l}@<<<0 
\end{CD}\end{equation}
where \[F_i=\bigoplus_{\mathbf{a}\in\nnn}S(-\mathbf{a})^{\beta_{i,\mathbf{a}}},\] which is exact everywhere except for in $F_0$, where $F_0/\im\phi_1\cong N.$ We also require the homomorphisms $\phi_{i}$ to be degree-preserving, in that degree $\feit{a}$ elements of $F_{i}$ are sent to degree $\feit{a}$ elements of $F_{i-1}$.\end{definition} 

It is straightforward to verify that the resolution being \emph{minimal} implies $\feit{a}\in\{0,1\}^{n}$ for each $\feit{a}$ appearing in (\ref{minres}). 

Replacing ``$\nnn$-graded'' with ``$\nn$-graded'' and setting \[F_i=\bigoplus_{j\in\nn}S(-j)^{\beta_{i,j}}\] in Definition \ref{minfree} gives us the definition of an \emph{$\nn$-graded minimal free resolution} of $N$. 

The $\Bb{i}{\feit{a}}$ and $\Bb{i}{j}$ are referred to as the $\nnn$-graded and $\nn$-graded \emph{Betti numbers} of $N$, respectively. Sometimes we want to emphasize the module $N$, and write $\Bb{i}{\feit{a}}(N)$ or $\Bb{i}{j}(N)$. Hilbert's Syzygy Theorem states that the \emph{length} $l$ of (\ref{minres}) is less than or equal to $n$. We shall here only be looking at minimal free resolutions of the Stanley-Reisner ideal $I_{M}$. Since $M$ is a matroid, these all have length $n-r(M)-1$ (see e.g.~\cite[Corollary 3(b)]{JV}). 

It follows from \cite[Theroem A.2.2]{HH} that \emph{the Betti numbers associated with a ($\nn$- or $\nnn$-graded) minimal free resolution are unique}, in that any other minimal free resolution must have the same Betti numbers. We may therefore, without ambiguity, write \[\Bb{i}{j}=\sum_{|\Supp(\feit{a})|=j}\Bb{i}{\feit{a}}.\] Note that for an empty ideal all the (graded or ungraded) Betti numbers are zero. This is for example always the case with $I_{\elo{M}{n-r(M)}}$ since $\elo{M}{n-r(M)}$ has no circuits. 

\begin{definition}
Let $I_{i}(M)$ denote the set consisting of those independent sets of $M$ that have cardinality $i$, and let $\field{k}^{I_{i}(M)}$ be the free $\field{k}$-vector space on $I_{i}(M)$. The \emph{(reduced) chain complex} of $M$ over $\field{k}$ is the complex \[\minCDarrowwidth18pt\begin{CD}0
@<<<\field{k}^{I_{0}(M)}@<\delta_{1}<<\cdots@<<<\field{k}^{I_{i-1}(M)}@<\delta_{i}<<\field{k}^{I_{i}(M)}@<<<\cdots@<\delta_{r(M)}<<\field{k}^{I_{r(M)}(M)}@<<<0\end{CD},\] where the boundary maps $\delta_{i}$ are defined on independent sets of $M$ of size $i$ as follows: With the natural ordering on $E$, set $\sign(j,\sigma)=(-1)^{r-1}$ if $j$ is the $r^{th}$ element of $\sigma\subseteq E$, and let \[\delta_{i}(\sigma)=\sum_{j\in \sigma}\sign(j,\sigma)\;\sigma\smallsetminus j.\] Extending $\delta_{i}$ $\field{k}$-linearly, we obtain a $\field{k}$-linear map from $I_{i}(M)$ to $I_{i-1}(M)$.
\end{definition}

\begin{definition}
The $i^{th}$ \emph{reduced homology} of $M$ over $\field{k}$ is the vector space \[H_{i}(M;\field{k})=\ker(\delta_{i})/\im(\delta_{i+1}).\]
\end{definition}

In proving our main result (Theorem \ref{main}), we shall draw upon the following two results, the first of which is a concatenation of \cite[Proposition 7.4.7 (i) and Proposition 7.8.1]{Bjoe}.
\begin{theorem}\label{BjoLemma} Let $H_{i}(M;\field{k})$ denote the $i$-th homology of $M$ over $\field{k}$. Then 
\[H_{i}(M;\field{k})=\begin{cases}
		\field{k}^{(-1)^{i-1}\chi(M)}, & i=r(M)\\
		0, & i\neq r(M). \\
	\end{cases}\]
\end{theorem}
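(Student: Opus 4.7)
The plan is to identify the paper's chain complex $(\field{k}^{I_\bullet(M)},\delta_\bullet)$ with the augmented reduced simplicial chain complex of the independence complex $\Delta_M$ on $E$, whose faces are exactly the independent sets of $M$. Under this identification $H_i(M;\field{k})$ is reduced simplicial homology of $\Delta_M$, and since every basis has the same cardinality $r(M)$, the complex $\Delta_M$ is pure of top dimension $r(M)-1$. Hence reduced homology automatically vanishes above the top degree, and the real content of the theorem is the vanishing in the lower degrees together with the rank formula in top degree.

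The main step is to show that $\Delta_M$ is shellable. I would fix any total order $e_1<\cdots<e_n$ on $E$ and shell the bases of $M$ in lexicographic order $B_1,\ldots,B_t$. The symmetric basis-exchange axiom then guarantees that for each $j\geq 2$ the intersection $\overline{B_j}\cap\bigl(\overline{B_1}\cup\cdots\cup\overline{B_{j-1}}\bigr)$ is pure of codimension one in the simplex $\overline{B_j}$, which is the combinatorial definition of shellability (the classical lexicographic shelling of Bj\"orner and Provan). Turning the abstract exchange axiom into this explicit intersection pattern is the technical heart of the argument, and is where I would expect to spend the most effort. A standard consequence is that a pure shellable simplicial complex of dimension $d$ is homotopy equivalent to a wedge of $d$-spheres, so $\tilde H_i(\Delta_M;\field{k})=0$ for $i\neq r(M)-1$ and $\tilde H_{r(M)-1}(\Delta_M;\field{k})$ is a free $\field{k}$-module.

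The rank is then forced by Euler--Poincar\'e: since only the top reduced Betti number of $\Delta_M$ is nonzero,
\[
(-1)^{r(M)-1}\dim_\field{k} H_{r(M)-1}(M;\field{k}) \;=\; \tilde\chi(\Delta_M) \;=\; \chi(M),
\]
where the second equality is a direct comparison of the paper's definition $\chi(M)=-1+f_1-f_2+\cdots+(-1)^{r(M)-1}f_{r(M)}$ with the standard reduced Euler characteristic of $\Delta_M$. Rearranging gives the asserted dimension $(-1)^{r(M)-1}\chi(M)$ and hence the theorem. An alternative route that sidesteps shellability would be induction on $|E|$ via the Mayer--Vietoris sequence attached to the decomposition $\Delta_M=\Delta_{M\setminus e}\cup\bigl(\{e\}\ast\Delta_{M/e}\bigr)$, in which the cone $\{e\}\ast\Delta_{M/e}$ is contractible and the inductive hypothesis applies to $\Delta_{M\setminus e}$ and $\Delta_{M/e}$; but the shellability proof is conceptually cleaner and is the one I would pursue.
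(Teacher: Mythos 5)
Your proposal is correct and coincides with the paper's treatment: the paper gives no independent argument but cites Bj\"orner's Propositions 7.4.7(i) and 7.8.1, which are proved exactly along your route --- lexicographic shellability of the matroid independence complex, the resulting wedge-of-spheres (hence vanishing below top dimension), and the Euler--Poincar\'e identification of the top Betti number with $(-1)^{r(M)-1}\chi(M)$. The only missing piece is the standard verification that the lexicographic order of bases is a shelling, which you correctly flag as the technical core and which is precisely the content of the cited classical result.
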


\begin{theorem}[Hochster's formula]\label{Hochster}\[\Bnum{i}{\sigma}{M}=\dim_{\field{k}}H_{|\sigma|-i-1}(M_{|\sigma};\field{k}).\]
\end{theorem}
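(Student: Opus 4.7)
The plan is to prove Hochster's formula via the standard route: interpret multigraded Betti numbers as $\operatorname{Tor}$-dimensions, compute $\operatorname{Tor}$ using the Koszul resolution of $\field{k}$, and then identify the relevant multigraded piece of the resulting complex with the reduced chain complex of the restricted matroid $M_{|\sigma}$.

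First, recall the general identity
\[
\beta_{i-1,\sigma}(I_{M}) \;=\; \dim_{\field{k}} \operatorname{Tor}^{S}_{i-1}(I_{M},\field{k})_{\sigma}.
\]
I would compute this $\operatorname{Tor}$ using the Koszul complex $K_{\bullet}$ on $x_{1},\ldots,x_{n}$, which is a minimal $\mathbb{N}^{n}$-graded free resolution of $\field{k}=S/(x_{1},\ldots,x_{n})$; this gives $\operatorname{Tor}^{S}_{i-1}(I_{M},\field{k})_{\sigma}\cong H_{i-1}(I_{M}\otimes_{S}K_{\bullet})_{\sigma}$. It is more convenient to work with $S/I_{M}$ than with $I_{M}$, so I would pass to the Stanley--Reisner ring via the short exact sequence $0\to I_{M}\to S\to S/I_{M}\to 0$: since $\operatorname{Tor}^{S}_{j}(S,\field{k})=0$ for $j\geq 1$, the long exact sequence of $\operatorname{Tor}$ yields
\[
\operatorname{Tor}^{S}_{i-1}(I_{M},\field{k})_{\sigma} \;\cong\; \operatorname{Tor}^{S}_{i}(S/I_{M},\field{k})_{\sigma}
\]
for every $i\geq 1$ and every nonempty $\sigma$. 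The problem is thus reduced to identifying the degree-$\sigma$ part of $(S/I_{M})\otimes_{S}K_{\bullet}$ with (a suitable reindexing of) the reduced chain complex of $M_{|\sigma}$.

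The heart of the argument is this identification. In homological degree $j$ and multidegree $\sigma$, the vector space $\bigl((S/I_{M})\otimes_{S}K_{j}\bigr)_{\sigma}$ has a $\field{k}$-basis indexed by those $\tau\subseteq\sigma$ with $|\tau|=j$ for which the monomial $x^{\sigma\smallsetminus\tau}$ is nonzero in $S/I_{M}$. Because $I_{M}$ is generated by circuit monomials, $x^{\sigma\smallsetminus\tau}$ is nonzero in $S/I_{M}$ exactly when $\sigma\smallsetminus\tau$ contains no circuit, i.e.\ when $\sigma\smallsetminus\tau$ is an independent set of $M_{|\sigma}$. Under the complementation bijection $\tau\leftrightarrow\sigma\smallsetminus\tau$, the Koszul differential transports to the simplicial boundary map $\delta$ of the reduced chain complex of the independence complex of $M_{|\sigma}$ (up to an overall sign and the reindexing $j\mapsto |\sigma|-j$). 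Taking homology at the relevant position, and using that over a field the simplicial chain and cochain homologies have equal dimensions, produces
\[
\dim_{\field{k}}\operatorname{Tor}^{S}_{i}(S/I_{M},\field{k})_{\sigma} \;=\; \dim_{\field{k}}H_{|\sigma|-i-1}(M_{|\sigma};\field{k}),
\]
which, combined with the shift of the previous step, yields the claimed formula.

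The main obstacle is the careful bookkeeping in this last step: one must verify that the Koszul signs transport correctly through the complementation $\tau\leftrightarrow\sigma\smallsetminus\tau$ so as to match the sign function used to define $\delta$ on the reduced chain complex, and one must pin down the exact homological shift (including the shift from $I_{M}$ to $S/I_{M}$) so the indices on the two sides of the equality line up. Once this combinatorial/sign verification is carried out, the Stanley--Reisner relations \emph{a posteriori} cut out precisely the independent-set basis on each term, and the two chain complexes are isomorphic; the rest is formal.
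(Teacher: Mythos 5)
Your outline is correct, but note that it does not parallel an argument in the paper, because the paper offers none: Theorem \ref{Hochster} is quoted as a classical result, the (suppressed) justification being a citation of \cite[Corollary 5.12]{MS}, whose own derivation runs through Alexander duality and (co)cellular resolutions rather than your route. What you sketch is the other standard proof (Bruns--Herzog style): $\beta_{i-1,\sigma}(I_M)=\dim_{\field{k}}\operatorname{Tor}^S_{i-1}(I_M,\field{k})_\sigma$, the shift to $\operatorname{Tor}^S_{i}(S/I_M,\field{k})_\sigma$ via the long exact sequence (valid for $i\geq 1$ and $\sigma\neq\emptyset$ since $\operatorname{Tor}^S_0(S,\field{k})$ is concentrated in multidegree $\mathbf{0}$), computation of Tor by the Koszul complex, and identification of the multidegree-$\sigma$ strand with a simplicial complex built from the independence complex of $M_{|\sigma}$; your description of the degree-$\sigma$ basis (the classes of $x^{\sigma\smallsetminus\tau}$ with $\sigma\smallsetminus\tau$ independent) is the right one. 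Two points need care in the bookkeeping you defer. First, the complementation $\tau\leftrightarrow\sigma\smallsetminus\tau$ turns the Koszul differential, which lowers $|\tau|$, into a map that \emph{raises} $|\sigma\smallsetminus\tau|$, so the strand you obtain is the reduced \emph{cochain} complex of the independence complex of $M_{|\sigma}$ and its homology is $\tilde{H}^{|\sigma|-i-1}$; calling this the boundary map $\delta$ is loose, and your appeal to the equality of homology and cohomology dimensions over a field is not optional polish but the step that converts the statement into the homological form asserted. Second, pin the indexing to the convention the paper actually uses downstream: the exponent $|\sigma|-i-1$ is correct when $H_j(M_{|\sigma};\field{k})$ is graded so that a rank-$r$ matroid complex has its homology concentrated in degree $r-1$, as in Theorem \ref{BjoLemma} and as needed for (\ref{eple}). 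With those two items fixed, your argument is a complete and self-contained proof, which is arguably a gain over the paper's bare citation.
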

\hide{\begin{proof}
See \cite[Corollary 5.12]{MS}. 
\end{proof}}

We would like to point out, for later use, that Theorems \ref{BjoLemma} and \ref{Hochster} combined imply  
\begin{equation}\label{eple}\sum_{i=0}^{n}(-1)^i\Bb{i}{\sigma}=(-1)^{n_{M}(\sigma)-1}\Bb{n_{M}(\sigma)-1}{\sigma}.\end{equation} 

It is established in \cite{Bjoe} that for a matroid $M$ the dimension of $H_{i}(M;\field{k})$ is in fact independent of $\field{k}$. Thus \emph{for matroids, the ($\nn$- or $\nnn$-graded) Betti numbers are not only unique, but independent of the choice of field}. We shall therefore omit referring to or specifying a particular field $\field{k}$ throughout. 

\begin{example}[Continuation of Ex.~\ref{RunEx}]\label{RunEx2}
Since $M(H)$ has set of circuits 
\begin{flalign*}\big\{&\{ 1, 2, 6, 7 \},\{ 5, 6 \},\{ 2, 3, 6, 7 \},\{ 1, 2, 3, 5 \},\{ 1, 3, 7\},\{ 1, 4, 7 \},\{ 1, 2, 3, 6 \},\\&\{ 2, 4, 6 \},\{ 2, 3, 5, 7 \},\{ 3, 4, 7 \},\{ 1, 2, 5, 7 \},\{ 1, 3, 4 \},\{ 2, 4, 5 \}\big\}\end{flalign*}
its Stanley-Reisner ideal is \begin{flalign*}I_{M(H)}=\langle& x_1x_2x_6x_7,x_5x_6,x_2x_3x_6x_7,x_1x_2x_3x_5,x_1x_3x_7,x_1x_4x_7,x_1x_2x_3x_6,\\&x_2x_4x_6,x_2x_3x_5x_7,x_3x_4x_7,x_1x_2x_5x_7,x_1x_3x_4,x_2x_4x_5\rangle.\end{flalign*}
Using MAGMA (\cite{MAGMA}), we find the $\nn$-graded minimal free resolution of $I_{M(H)}$ to be \[\minCDarrowwidth6pt\begin{CD}0@<<<S(-2)\oplus S(-3)^6\oplus S(-4)^6@<<<S(-4)^5\oplus S(-5)^{28}@<<<S(-6)^{31}@<<<S(-7)^{10}@<<<0\end{CD}.\]
Similarly, we find the $\nn$-graded minimal free resolutions corresponding to the elongations of $M$ to be
\begin{flalign*}
&I_{\elo{M(H)}{1}}:\\ &\minCDarrowwidth18pt\begin{CD}0@<<<S(-4)^{2}\oplus S(-5)^{15}@<<<S(-6)^{29}@<<<S(-7)^{13}@<<<0\end{CD},\\
\\
&I_{\elo{M(H)}{2}}:\\ &\minCDarrowwidth18pt\begin{CD}0@<<<S(-6)^{7}@<<<S(-7)^{6}@<<<0\end{CD},\\
\\
&I_{\elo{M(H)}{3}}:\\ &\minCDarrowwidth18pt\begin{CD}0@<<<S(-7)@<<<0\end{CD}.\end{flalign*}
\end{example}

\section{Number of codewords of weight $j$}\label{Pol}
Let $C$ be a linear $[n,k]$-code over $\Ff{q}$, with a generator matrix $G=\begin{bmatrix}g_{i,j}\end{bmatrix}$ for $1\leq i\leq k$, $1\leq j\leq n$. Let $Q=q^r$ for some $r\in\N$. 
\begin{definition}
For $0\leq m\leq n$, let $A_{C,m}(Q)$ denote the number of words of weight $m$ in $C\otimes_{\Ff{q}}\Ff{Q}$.  
\end{definition}
Let $\mathbf{c}_j$ denote column $j$ of $G$. If $\mathbf{a}=(a_1,a_2,\dots,a_k)\in {\Ff{Q}}^{k}$, the codeword $\mathbf{a}\cdot G$ has weight $n$ if and only if \[{\mathbf{c}_{j}}^T\cdot \mathbf{a}\neq 0\] for all $1\leq j\leq n.$ In other words, if we let $S_j(Q)=\{\mathbf{x}\in \Ff{Q}^k:{\mathbf{c}_{j}}^T\cdot \mathbf{x}=0\}$, corresponding to column $j$, we have that $\mathbf{a}\cdot G$ has weight $n$ if and only if \begin{equation}\label{inex}\mathbf{a}\in \Ff{Q}^k\smallsetminus (S_1(Q)\cup S_2(Q)\cup\cdots\cup S_n(Q)).\end{equation}

\begin{definition}
For $U=\{u_1,u_2,\ldots,u_s\}\subseteq \{1,\ldots,n\}$, let \[S_U(Q)=S_{u_1}(Q)\cap S_{u_2}(Q)\cap\cdots\cap S_{u_s}(Q).\] 
\end{definition}
By the inclusion/exclusion-principle then, we see from (\ref{inex}) that \[A_{C,n}(Q)=Q^k-\sum_{|U|=1}|S_{U}(Q)|+\sum_{|U|=2}|S_{U}(Q)|+\cdots+(-1)^n\sum_{|U|=n}|S_U(Q)|.\] 
If $B_U=\begin{pmatrix}
\ \ \mathbf{c}_{u_1}^T \ \ \\ \ \ \ \mathbf{c}_{u_2}^T\ \ \ \\\vdots\\ \ \ \mathbf{c}_{u_s}^T \ \ \end{pmatrix}$, then $|S_U(Q)|=Q^{\dim(\ker B_U)}=Q^{k-\dim(\col B_U)}=Q^{k-r_{M(G)}(U)},$ which according to (\ref{dual}) is equal to $Q^{n_{M(H)}(E\smallsetminus U)}$. Since $Q^k=|S_{\emptyset}(Q)|\hide{\sum_{|U|=0}|S_U(Q)|}$, we conclude that \begin{equation}\label{an}A_{C,n}(Q)=\sum_{U\subseteq E}(-1)^{|U|}Q^{n_{M(H)}(E\smallsetminus U)}=(-1)^{n}\sum_{\gamma\subseteq E}(-1)^{|\gamma|}Q^{n_{M(H)}(\gamma)}.\end{equation} 
\hide{ 
In particular, if $H$ is a parity-check matrix for $C$ then \[A_{C,n}(0)=\sum_{n_{M(H)}(\gamma)=0}(-1)^{|\gamma|}=(-1)^{n+1}\chi(M(H)).\]} 

\begin{definition}\[a_{C,\sigma}(Q)=|\{w\in C\otimes_{\Ff{q}}\Ff{Q}: \Supp(w)=\sigma\}|.\]\end{definition}
\begin{lemma}\label{ak}
\[a_{C,\sigma}(Q)=(-1)^{|\sigma|}\sum_{\gamma\subseteq\sigma}(-1)^{|\gamma|}Q^{n_M(\gamma)}.\]
\end{lemma}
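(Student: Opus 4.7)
The plan is to reduce the lemma to the special case $\sigma = E$, which is precisely equation (\ref{an}). A codeword $w \in C\otimes_{\F{q}}\F{Q}$ satisfies $\Supp(w) = \sigma$ if and only if $w$ vanishes on $E\smallsetminus \sigma$ \emph{and} its restriction to the coordinates in $\sigma$ is nowhere zero, i.e.\ has full weight $|\sigma|$. Forgetting the zero coordinates outside $\sigma$ therefore sets up a bijection between the set counted by $a_{C,\sigma}(Q)$ and the set of weight-$|\sigma|$ codewords of the extension $C'\otimes_{\F{q}}\F{Q}$, where $C'$ denotes the shortening of $C$ at $J := E\smallsetminus\sigma$. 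The small point to verify here is that shortening commutes with extension of scalars, which one checks by choosing an $\F{q}$-basis of $\F{Q}$ and decomposing each codeword componentwise. Consequently $a_{C,\sigma}(Q) = A_{C',|\sigma|}(Q)$.

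Next I would identify a parity-check matrix for $C'$. Since taking duals interchanges shortening and puncturing, a parity-check matrix $H'$ of $C'$ can be obtained from $H$ simply by deleting the columns indexed by $J$. Hence $M(H') = M_{|\sigma}$, and for every $\gamma\subseteq\sigma$ one has $n_{M(H')}(\gamma) = n_M(\gamma)$, since the rank of a subset of $\sigma$ depends only on the columns of $H$ indexed by $\sigma$.

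Applying equation (\ref{an}) to $C'$ -- a code of length $|\sigma|$ with parity-check matroid $M(H')$ -- then gives
\[a_{C,\sigma}(Q) = A_{C',|\sigma|}(Q) = (-1)^{|\sigma|}\sum_{\gamma\subseteq\sigma}(-1)^{|\gamma|}Q^{n_{M(H')}(\gamma)} = (-1)^{|\sigma|}\sum_{\gamma\subseteq\sigma}(-1)^{|\gamma|}Q^{n_M(\gamma)},\]
which is the desired identity. The only nontrivial ingredient is the commutation of shortening with scalar extension; everything else is bookkeeping with the matroid rank/nullity definitions. As an alternative route I could instead count words of $C\otimes_{\F{q}}\F{Q}$ whose support is \emph{contained} in $\sigma$, recognize this count as $Q^{n_M(\sigma)}$ via equation (\ref{dual}) together with the relation $M(G) = M^*$, and then apply M\"obius inversion on the Boolean lattice of subsets of $\sigma$ to extract $a_{C,\sigma}(Q)$ in the same form.
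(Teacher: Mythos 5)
Your argument is correct and essentially the paper's own: the paper likewise passes to the shortening at $E\smallsetminus\sigma$, observes that $H_{|\sigma}$ is a parity-check matrix for it with $M(H_{|\sigma})\cong M(H)_{|\sigma}$, and reapplies the inclusion-exclusion argument leading to (\ref{an}), finishing with $n_{M(H)_{|\sigma}}(\gamma)=n_{M(H)}(\gamma)$ for $\gamma\subseteq\sigma$. The only difference is cosmetic: the paper shortens the already-extended code $C\otimes_{\F{q}}\F{Q}$ rather than extending the shortened code, which lets it bypass the shortening-commutes-with-extension check that you rightly flag and correctly sketch.
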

\begin{proof}
Let $C_{\sigma}(Q)$ denote the shortening of $C\otimes_{\Ff{q}}\Ff{Q}$ in $\{1\ldots n\}\smallsetminus \sigma$, and let $H_{|\sigma}$ be the restriction of $H$ to columns indexed by $\sigma$. Then $H_{|\sigma}$ is a parity-check matrix for $C_{\sigma}(Q)$. 

Clearly $a_{C,\sigma}(Q)=a_{C_{\sigma},\sigma}(Q),$ and since $M(H)_{|\sigma}\cong M(H_{|\sigma})$ it follows by an argument similar to the one leading to (\ref{an}) that \[a_{C,\sigma}(Q)=(-1)^{|\sigma|}\sum_{\gamma\subseteq\sigma}(-1)^{|\gamma|}Q^{n_{M(H)_{|\sigma}}(\gamma)}.\] The result follows, since $n_{M(H)_{|\sigma}}(\gamma)=n_{M(H)}(\gamma)$ for all $\gamma\subseteq \sigma$.
\end{proof}

\begin{proposition}\label{forkoder}For $1\leq m\leq n$, 
\[A_{C,m}(Q)=(-1)^m\sum_{|\sigma|=m}\sum_{\gamma\subseteq\sigma}(-1)^{|\gamma|}Q^{n_{M(H)}(\gamma)}\] 
\end{proposition}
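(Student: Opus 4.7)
The plan is to partition codewords of weight $k$ according to their support. A codeword $w \in C \otimes_{\F{q}} \F{Q}$ has weight $k$ precisely when $|\Supp(w)| = k$, so grouping by the possible supports gives
\[A_{C,k}(Q) = \sum_{\substack{\sigma \subseteq \{1,\ldots,n\} \\ |\sigma| = k}} a_{C,\sigma}(Q),\]
where $a_{C,\sigma}(Q)$ is the quantity defined just before Lemma \ref{ak}. Since the sets $\{w : \Supp(w) = \sigma\}$ for distinct $\sigma$ are disjoint and their union over $|\sigma| = k$ is exactly the set of weight-$k$ codewords, this identity is immediate from the definitions and requires no further argument.

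Next I would apply Lemma \ref{ak} directly to each summand. That lemma states
\[a_{C,\sigma}(Q) = (-1)^{|\sigma|} \sum_{\gamma \subseteq \sigma} (-1)^{|\gamma|} Q^{n_{M(H)}(\gamma)},\]
and since every $\sigma$ appearing in our sum has $|\sigma| = k$, the sign $(-1)^{|\sigma|}$ is the constant $(-1)^k$ and can be pulled outside the outer sum. This yields exactly the asserted formula.

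There is essentially no obstacle here; the proposition is a bookkeeping consequence of Lemma \ref{ak} once one notes the support decomposition. The only thing worth writing explicitly in the proof is the first displayed equation above, so that the reader sees how $A_{C,k}$ breaks up as a sum over supports of size $k$, after which the substitution from Lemma \ref{ak} finishes the argument in one line.
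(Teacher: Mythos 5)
Your proposal is correct and matches the paper's proof exactly: the paper also deduces the proposition from Lemma \ref{ak} via the decomposition $A_{C,k}(Q)=\sum_{|\sigma|=k}a_{C,\sigma}(Q)$, with the sign $(-1)^{|\sigma|}=(-1)^k$ pulled out. Your write-up merely spells out the support decomposition a little more explicitly than the paper's one-line argument.
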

\begin{proof}
This is clear from Lemma \ref{ak}, since $A_{C,m}(Q)=\sum_{|\sigma|=m}a_{C,\sigma}(Q)$.  
\end{proof}
In the following sections we shall see what comes from generalizing the \emph{weight polynomials} $A_{C,m}(Q)$ to matroids.

\section{Generalized weight polynomials and a generalized enumerator}\label{SecGWP}
Looking back at Proposition \ref{forkoder}, it is clear that the polynomial $A_{C}(Q)$ appearing there may equally well be defined for matroids in general -- not only for those derived from a linear code. 

For the remainder of this section, let $M$ be a matroid on $E=\{1,\ldots,n\}$.
\subsection{GWP and the enumerator}
\begin{definition}[GWP]\label{GWP}
We define the polynomial $P_{M,j}(Z)$ by letting $P_{M,0}(Z)=1$ and \[P_{M,j}(Z)=(-1)^j\sum_{|\sigma|=j}\sum_{\gamma\subseteq\sigma}(-1)^{|\gamma|}Z^{n_M(\gamma)} \text{ for }1\leq j\leq n.\] We shall refer to $P_{M,j}$ as the $j^{th}$ \emph{generalized weight polynomial}, or just \emph{GWP}, of $M$. 
\end{definition}
In light of Proposition \ref{forkoder}, we see that $A_{C,j}(q^{r})=P_{M(H),j}(q^{r})$ for any linear $\Ff{q}$-code $C$ with parity check matrix $H$. 

Comparing Definition \ref{GWP} with the definition of $d_i(M)$, it is immediately clear that the generalized weight polynomials together determine the higher weights:
\begin{proposition}\label{gwpbestemmerdi}
\[d_i(M)=\min\{j:\deg P_{M,j}=i\}.\]  
\end{proposition}

Also, we would like to point out that the $n^{th}$ generalized weight polynomial of $M$ is equal to the \emph{characteristic polynomial} (see \cite{Ku}) of $\dual{M}$.

Analogous to how $A_{C,j}(Q)$ is used to define the \emph{extended weight enumerator} $W_{C}(X,Y,Q)$ of a code $C$ (see \cite{JP}), we use the GWP to define the \emph{enumerator} of $M$:
\begin{definition}[Matroid enumerator]
The \emph{enumerator} $W_{M}$ of $M$ is \[W_{M}(X,Y,Z)=\sum_{i=0}^{n}P_{M,i}(Z)X^{n-i}Y^i.\] 
\end{definition}
\begin{example}\label{Vamos}
Let $\mathcal{V}^8$ be the matroid on $E=\{1,\ldots,8\}$ with bases \[\{\sigma\subseteq E: |\sigma|=4\}\smallsetminus \big\{\{1,2,3,4\},\{1,2,7,8\},\{3,4,5,6\},\{3,4,7,8\},\{5,6,7,8\}\big\}.\] This is the well-known \emph{V\'amos matroid}. It is non-representable; that is, it is not the vector matroid of any matrix (and thus does not come from any code). Using MAGMA, we find the enumerator of $\mathcal{V}^8$ to be \begin{flalign*}W_{\mathcal{V}^8}(X,Y,Z)=&X^8 + 5X^4Y^4Z - 5X^4Y^4 + 36X^3Y^5Z - 36X^3Y^5 + 28X^2Y^6Z^2\\& -138X^2Y^6Z + 110X^2Y^6 + 8XY^7Z^3 - 56XY^7Z^2 + 148XY^7Z\\& -100XY^7 + Y^8Z^4 - 8Y^8Z^3 + 28Y^8Z^2 - 51Y^8Z + 30Y^8.\end{flalign*}
\end{example}

Observe that if $C$ is a linear code with parity-check matrix $H$ and extended weight enumerator $W_{C}(X,Y,Q)$ (see e.g.~\cite{JP}), then \[W_{C}(X,Y,Q)=W_{M(H)}(X,Y,Q).\] 

\subsection{Equivalence to the Tutte polynomial}
It was shown in \cite{JP} that the extended weight enumerator of a linear code is equivalent to the Tutte polynomial of its associated matroid. We shall see that this is still true when it comes to matroids and their enumerators, in general. After a small leap (Proposition \ref{jan}), an analogous proof to the one found in \cite{JP} for linear codes went through.  
\begin{proposition}\label{jan}\[P_{M,i}(Z)=\sum_{j=n-i}^{n}(-1)^{i+j+n}\binom{j}{n-i}\sum_{|\gamma|=j}Z^{n_{M}(E\smallsetminus \gamma)}.\]
\end{proposition}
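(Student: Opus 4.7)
The plan is to start from the definition
\[P_{M,i}(Z)=(-1)^i\sum_{|\sigma|=i}\sum_{\gamma\subseteq\sigma}(-1)^{|\gamma|}Z^{n_M(\gamma)}\]
and swap the order of the two summations. Each subset $\gamma\subseteq E$ of cardinality $k\leq i$ is contained in exactly $\binom{n-k}{i-k}$ subsets $\sigma$ of cardinality $i$, so the inner double sum collapses to
\[P_{M,i}(Z)=(-1)^i\sum_{k=0}^{i}(-1)^{k}\binom{n-k}{i-k}\sum_{|\gamma|=k}Z^{n_M(\gamma)}.\]

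Next I would re-index by the complements. For each $\gamma\subseteq E$ with $|\gamma|=k$, write $\gamma' = E\smallsetminus \gamma$, so $|\gamma'|=n-k$ and $Z^{n_M(\gamma)} = Z^{n_M(E\smallsetminus\gamma')}$. Setting $j=n-k$, the range $0\leq k\leq i$ becomes $n-i\leq j\leq n$, while
\[\binom{n-k}{i-k}=\binom{j}{\,j-(n-i)\,}=\binom{j}{n-i},\]
and the parity $(-1)^{i+k}=(-1)^{i+n-j}$ equals $(-1)^{i+j+n}$ since $2j$ is even. Substituting these into the displayed expression above yields exactly
\[P_{M,i}(Z)=\sum_{j=n-i}^{n}(-1)^{i+j+n}\binom{j}{n-i}\sum_{|\gamma'|=j}Z^{n_M(E\smallsetminus \gamma')},\]
which is the claim.

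This is essentially a purely combinatorial rearrangement — there is no real obstacle beyond bookkeeping the binomial coefficient (noting $\binom{n-k}{i-k}=\binom{j}{n-i}$ after the substitution $j=n-k$) and checking that the signs line up. The only step that might be worth spelling out carefully is the equality $(-1)^{i+k}=(-1)^{i+j+n}$ after the substitution, since a sign error there would flip the whole formula.
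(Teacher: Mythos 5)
Your proof is correct and is essentially the same argument as the paper's: both amount to interchanging the two summations and reindexing by complements, with the binomial coefficient counting how many $i$-sets $\sigma$ are compatible with a fixed $\gamma$. The only cosmetic difference is the order of operations --- you swap the sums first and complement afterwards (getting $\binom{n-k}{i-k}=\binom{j}{n-i}$), while the paper complements inside the inner sum first and then swaps --- and your sign bookkeeping $(-1)^{i+k}=(-1)^{i+j+n}$ is correct.
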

\begin{proof}
\begin{flalign*}P_{M,i}(Z)&=(-1)^i\sum_{|\sigma|=i}\sum_{\gamma\subseteq\sigma}(-1)^{|\gamma|}Z^{n_M(\gamma)}\\&=(-1)^i\sum_{|\sigma|=i}\sum_{E\smallsetminus \gamma\subseteq\sigma}(-1)^{|E\smallsetminus \gamma|}Z^{n_M(E\smallsetminus \gamma)}\\&=(-1)^i\sum_{|\sigma|=i}\sum_{E\smallsetminus \sigma\subseteq \gamma}(-1)^{|E\smallsetminus \gamma|}Z^{n_M(E\smallsetminus \gamma)}\\&=(-1)^i\sum_{|\gamma|\geq n-i}\sum_{\left\{\substack{E\smallsetminus\sigma:\\ E\smallsetminus\sigma\subseteq \gamma,\\ |\sigma|=i}\right\}}(-1)^{|E\smallsetminus \gamma|}Z^{n_M(E\smallsetminus \gamma)}\\&=(-1)^i\sum_{|\gamma|\geq n-i}\binom{j}{n-i}(-1)^{|E\smallsetminus \gamma|}Z^{n_M(E\smallsetminus \gamma)}\\&=\sum_{j=n-i}^{n}\sum_{|\gamma|=j}\binom{j}{n-i}(-1)^{i+j+n}Z^{n_M(E\smallsetminus \gamma)}.\end{flalign*} 
\end{proof}
Proposition \ref{jan} above is what enables us to use basically the same technique as that employed in \cite{JP} for the proofs of Theorems \ref{Tutte1} and \ref{Tutte2}.
\begin{lemma}\label{r1}
\[W_{M}(X,Y,Z)=\sum_{j=0}^{n}\sum_{|\gamma|=j}Z^{n_{M}(E\smallsetminus \gamma)}(X-Y)^{j}Y^{n-j}.\] 
\end{lemma}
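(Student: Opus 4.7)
The plan is to substitute the formula for $P_{M,i}(Z)$ from Proposition \ref{jan} directly into the definition of the enumerator $W_M(X,Y,Z) = \sum_{i=0}^{n} P_{M,i}(Z) X^{n-i} Y^i$, then swap the order of summation so that the outer sum is over $j$ (the cardinality of $\gamma$) and recognise the remaining inner sum as a binomial expansion of $(X-Y)^j$.

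Concretely, first I would write
\[
W_M(X,Y,Z) = \sum_{i=0}^{n}\sum_{j=n-i}^{n}(-1)^{i+j+n}\binom{j}{n-i}\sum_{|\gamma|=j}Z^{n_M(E\smallsetminus\gamma)}\,X^{n-i}Y^{i}.
\]
The condition $n-i \le j \le n$ is equivalent to $n-j \le i \le n$, so interchanging the $i$- and $j$-sums and pulling the $\gamma$-sum out gives
\[
W_M(X,Y,Z)=\sum_{j=0}^{n}\sum_{|\gamma|=j}Z^{n_M(E\smallsetminus\gamma)}\sum_{i=n-j}^{n}(-1)^{i+j+n}\binom{j}{n-i}X^{n-i}Y^{i}.
\]

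Next I would simplify the innermost sum by substituting $k=n-i$, turning it into $\sum_{k=0}^{j}(-1)^{j-k}\binom{j}{k}X^{k}Y^{n-k}$ (using that $(-1)^{2n}=1$). Factoring out $Y^{n-j}$ leaves $\sum_{k=0}^{j}\binom{j}{k}X^{k}(-Y)^{j-k}$, which is precisely $(X-Y)^{j}$ by the binomial theorem. Putting everything back together yields the claimed identity
\[
W_M(X,Y,Z)=\sum_{j=0}^{n}\sum_{|\gamma|=j}Z^{n_M(E\smallsetminus\gamma)}(X-Y)^{j}Y^{n-j}.
\]

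No real obstacle is expected; the only thing to be careful about is getting the bookkeeping of the signs and the index ranges right when swapping the summations, and verifying that $(-1)^{i+j+n}$ becomes $(-1)^{j-k}$ after the substitution $k=n-i$. Once that is handled, the binomial expansion step is immediate.
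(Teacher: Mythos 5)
Your proposal is correct and follows essentially the same route as the paper: substitute Proposition \ref{jan} into the definition of $W_M$, interchange the $i$- and $j$-sums, re-index the inner sum, and finish with the binomial theorem. The only cosmetic difference is your change of variable $k=n-i$ (giving $(-1)^{j-k}\binom{j}{k}X^kY^{n-k}$) versus the paper's $k=j-n+i$, which are equivalent re-indexings.
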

\begin{proof} 
\begin{flalign*}W_{M}(X,Y,Z)&=\sum_{i=0}^{n}\sum_{j=n-i}^{n}(-1)^{i+j+n}\binom{j}{n-i}\sum_{|\gamma|=j}Z^{n_M(E\smallsetminus \gamma)}X^{n-i}Y^{i}\\&=\sum_{j=0}^{n}\sum_{i=n-j}^{n}(-1)^{j-n+i}\binom{j}{j-n+i}\sum_{|\gamma|=j}Z^{n_M(E\smallsetminus \gamma)}X^{n-i}Y^{i}\\&=\sum_{j=0}^{n}\sum_{k=0}^{j}(-1)^{k}\binom{j}{k}\sum_{|\gamma|=j}Z^{n_M(E\smallsetminus \gamma)}X^{j-k}Y^{n-j+k}\\&=\sum_{j=0}^{n}\sum_{|\gamma|=j}Z^{n_M(E\smallsetminus \gamma)}\left(\sum_{k=0}^{j}(-1)^{k}\binom{j}{k}X^{j-k}Y^{k}\right)Y^{n-j}\\&=\sum_{j=0}^{n}\sum_{|\gamma|=j}Z^{n_M(E\smallsetminus \gamma)}(X-Y)^{j}Y^{n-j}.\end{flalign*}\end{proof} 

We shall also need a slight reformulation of the Tutte polynomial. 
\begin{lemma}\label{lemTutte}
\[t_{M}(X,Y)=\sum_{j=0}^{n}\sum_{|\gamma|=j}(X-1)^{n_{M}(E\smallsetminus \gamma)-(n-r(M)-j)}(Y-1)^{n_{M}(E\smallsetminus \gamma)}.\] 
\end{lemma}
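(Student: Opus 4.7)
The plan is to obtain the claimed formula by a direct reindexing of the sum in the definition (Definition \ref{Tuttedef}) of the Tutte polynomial. Namely, since the map $\sigma \mapsto E\smallsetminus\sigma$ is a bijection on the power set of $E$, we may rewrite
\[
t_M(X,Y)=\sum_{\gamma\subseteq E}(X-1)^{r(E)-r(E\smallsetminus\gamma)}(Y-1)^{|E\smallsetminus\gamma|-r(E\smallsetminus\gamma)},
\]
and then translate both exponents into the language of $n_M(E\smallsetminus\gamma)$.

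The key computation is to use the tautological identity $r(\tau)=|\tau|-n_M(\tau)$. Applied to $\tau=E\smallsetminus\gamma$, the exponent of $(Y-1)$ immediately becomes $|E\smallsetminus\gamma|-r(E\smallsetminus\gamma)=n_M(E\smallsetminus\gamma)$, matching the right-hand side of the lemma. For the exponent of $(X-1)$, writing $j=|\gamma|$ so that $|E\smallsetminus\gamma|=n-j$, we compute
\[
r(E)-r(E\smallsetminus\gamma)=k-\bigl((n-j)-n_M(E\smallsetminus\gamma)\bigr)=n_M(E\smallsetminus\gamma)-(n-k-j).
\]
Grouping the sum over $\gamma\subseteq E$ by cardinality $j=|\gamma|$ then produces the desired double sum
\[
t_M(X,Y)=\sum_{j=0}^{n}\sum_{|\gamma|=j}(X-1)^{n_M(E\smallsetminus\gamma)-(n-k-j)}(Y-1)^{n_M(E\smallsetminus\gamma)}.
\]

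This is essentially a bookkeeping exercise, and I do not anticipate any real obstacle. The only point requiring care is the translation of $r(E)-r(E\smallsetminus\gamma)$ via the rank/nullity identity; once that is verified the result drops out. The formulation in Lemma \ref{lemTutte} is useful precisely because it mirrors the shape of Lemma \ref{r1}, and this parallel is what will later allow $W_M(X,Y,Z)$ and $t_M(X,Y)$ to be compared term by term in the proofs of Theorems \ref{Tutte1} and \ref{Tutte2}.
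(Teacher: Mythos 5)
Your proof is correct and is essentially the paper's argument: both reindex the defining sum over subsets by passing to complements and then rewrite the two exponents as nullities. The only cosmetic difference is that the paper expresses the $(X-1)$-exponent via the dual nullity, using $n^{*}(\gamma)=n(E\smallsetminus\gamma)-(r^{*}(E)-|\gamma|)$ together with (\ref{dual}), whereas you obtain the same exponent directly from the rank--nullity identity $r(\tau)=|\tau|-n_{M}(\tau)$; the computations coincide.
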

\begin{proof}
Follows by rewriting the $t_{M}(X,Y)$ from Definition \ref{Tuttedef} as \[t_{M}(X,Y)=\sum_{\gamma\subseteq E}(X-1)^{\dual{n}(\gamma)}(Y-1)^{n(E\smallsetminus \gamma)}\] and noting that $\dual{n}(\gamma)=n(E\smallsetminus \gamma)-(\dual{r}(E)-|\gamma|)$.  

\end{proof}
Using Lemmas \ref{r1} and \ref{lemTutte} it is now routine to verify the following two identities:
\begin{theorem}\label{Tutte1}
\[W_{M}(X,Y,Z)=(X-Y)^{n-r(M)}Y^{r(M)}t_{M}\left(\frac{X}{Y},\frac{X+(Z-1)Y}{X-Y}\right).\] 
\end{theorem}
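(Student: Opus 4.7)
The plan is to verify the identity by direct substitution, playing Lemma \ref{lemTutte} off against Lemma \ref{r1}. Both lemmas express the relevant polynomial as a double sum indexed by subsets $\gamma \subseteq E$ together with a factor involving $Z^{n_M(E \smallsetminus \gamma)}$ or $(Y-1)^{n_M(E \smallsetminus \gamma)}$, so the natural move is to pass from one sum to the other by substituting $X/Y$ and $(X+(Z-1)Y)/(X-Y)$ into the Tutte expression from Lemma \ref{lemTutte}.

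The first step is to compute the two relevant linear shifts: $\frac{X}{Y}-1 = \frac{X-Y}{Y}$ and $\frac{X+(Z-1)Y}{X-Y}-1 = \frac{ZY}{X-Y}$, the latter using the cancellation $X+(Z-1)Y-(X-Y) = ZY$. Plugging these into Lemma \ref{lemTutte}, the generic summand becomes
\[
\left(\frac{X-Y}{Y}\right)^{n_M(E\smallsetminus\gamma)-(n-k-j)}\left(\frac{ZY}{X-Y}\right)^{n_M(E\smallsetminus\gamma)}.
\]
A short exponent-count shows the $(X-Y)$-powers collapse to $(X-Y)^{j-(n-k)}$ and the $Y$-powers to $Y^{(n-j)-k}$, leaving the clean expression $(X-Y)^{j-(n-k)}\, Y^{n-j-k}\, Z^{n_M(E\smallsetminus\gamma)}$.

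The second step is to multiply by the prefactor $(X-Y)^{n-k}Y^{k}$; this precisely absorbs the negative exponents and leaves $(X-Y)^{j}Y^{n-j}Z^{n_M(E\smallsetminus\gamma)}$. Summing over all $\gamma \subseteq E$ (equivalently, first over $j$ and then over $|\gamma|=j$) gives exactly the right-hand side of Lemma \ref{r1}, so the theorem follows.

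There is no real obstacle here beyond careful exponent bookkeeping; the only subtlety is the algebraic identity $X+(Z-1)Y-(X-Y)=ZY$, which is what makes the substitution convert the $(Y-1)$-powers of the Tutte polynomial into the $Z$-powers of the enumerator. Because the substitution involves division by $X-Y$ and $Y$, the computation is carried out over the field $\mathbb{Q}(X,Y,Z)$, but since the final identity is a polynomial identity it holds in $\mathbb{Z}[X,Y,Z]$.
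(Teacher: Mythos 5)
Your proposal is correct and follows essentially the same route as the paper: substitute the given arguments into the expression of Lemma \ref{lemTutte}, use $\frac{X}{Y}-1=\frac{X-Y}{Y}$ and $\frac{X+(Z-1)Y}{X-Y}-1=\frac{ZY}{X-Y}$, simplify the exponents, and identify the result with the formula of Lemma \ref{r1}. The exponent bookkeeping you describe matches the paper's computation exactly, so there is nothing to add.
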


\begin{theorem}\label{Tutte2}
\[t_{M}(X,Y)=(X-1)^{-(n-r(M))}X^{n}W_{M}(1,X^{-1},(X-1)(Y-1)).\]
\end{theorem}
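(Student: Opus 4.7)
The plan is to derive this as a direct corollary of Theorem \ref{Tutte1} by solving for a substitution $(X_0,Y_0,Z_0)$ of $(X,Y,Z)$ in the enumerator that makes the arguments of $t_M$ on the right-hand side of Theorem \ref{Tutte1} equal to $(X,Y)$.

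First, I would look for $(X_0,Y_0,Z_0)$ satisfying
\[\frac{X_0}{Y_0}=X\qquad\text{and}\qquad\frac{X_0+(Z_0-1)Y_0}{X_0-Y_0}=Y.\]
The simplest solution is $X_0=1$, $Y_0=X^{-1}$. With this choice, the second equation reduces (after clearing denominators) to $1+(Z_0-1)X^{-1}=Y(1-X^{-1})$, which simplifies to $Z_0=(X-1)(Y-1)$. So the desired substitution is $(X_0,Y_0,Z_0)=(1,X^{-1},(X-1)(Y-1))$.

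Next, I would plug this into Theorem \ref{Tutte1}. The prefactor $(X_0-Y_0)^{n-k}Y_0^{k}$ becomes
\[(1-X^{-1})^{n-k}(X^{-1})^{k}=\frac{(X-1)^{n-k}}{X^{n-k}}\cdot X^{-k}=\frac{(X-1)^{n-k}}{X^{n}},\]
yielding
\[W_{M}(1,X^{-1},(X-1)(Y-1))=\frac{(X-1)^{n-k}}{X^{n}}\,t_{M}(X,Y).\]
Multiplying both sides by $X^{n}(X-1)^{-(n-k)}$ gives exactly the claimed identity.

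There is no real obstacle here; the only subtlety is to observe that $W_{M}(1,X^{-1},(X-1)(Y-1))$ is a priori a Laurent polynomial in $X$, but the identity shows it equals $(X-1)^{n-k}X^{-n}t_{M}(X,Y)$, so after multiplication by $X^{n}(X-1)^{-(n-k)}$ we recover an honest polynomial. One could alternatively clear denominators first by working with the identity $X^{n}W_{M}(1,X^{-1},Z)=\sum_{i}P_{M,i}(Z)X^{i}$ to avoid any manipulation of rational expressions, but invoking Theorem \ref{Tutte1} directly is the most efficient route.
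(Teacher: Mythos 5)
Your argument is correct, but it takes a different route from the paper. You obtain Theorem \ref{Tutte2} as a formal corollary of Theorem \ref{Tutte1}, by finding the substitution $(X_0,Y_0,Z_0)=(1,X^{-1},(X-1)(Y-1))$ that turns the arguments of $t_M$ into $(X,Y)$ and then dividing out the prefactor $(1-X^{-1})^{n-k}X^{-k}=(X-1)^{n-k}X^{-n}$; your algebra for both the arguments and the prefactor checks out, and the substitution is legitimate since Theorem \ref{Tutte1} is an identity of rational functions in which the only denominators involved, $Y_0=X^{-1}$ and $X_0-Y_0=1-X^{-1}$, remain nonzero. The paper instead proves Theorem \ref{Tutte2} independently of Theorem \ref{Tutte1}: it expands $(X-1)^{-(n-k)}X^{n}W_{M}(1,X^{-1},(X-1)(Y-1))$ directly via Lemma \ref{r1} and simplifies term by term until it matches the expression for $t_M$ in Lemma \ref{lemTutte}, so both theorems are derived on an equal footing from the two lemmas, with only polynomial manipulations of the summands and no appeal to inverting a previously proved identity. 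Your derivation is shorter once Theorem \ref{Tutte1} is in hand and makes the two statements' mutual equivalence transparent (they are inverse substitutions of one another); the paper's computation is slightly longer but avoids any discussion of specializing rational-function identities, which is the one point your write-up rightly flags and handles.
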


\begin{example}[Continuation of Ex.~\ref{Vamos}]
Having already found the weight enumerator of $\mathcal{V}^8$, we infer from Theorem \ref{Tutte2} that \[t_{\mathcal{V}^8}(X,Y)=X^4 + 4X^3 + 10X^2 + 5XY + 15X + Y^4 + 4Y^3 + 10Y^2 + 15Y.\] 
\end{example}

\section{The GWP is determined by $\nn$-graded Betti numbers}\label{BettiSect}
As before, let $M$ be a matroid on $E=\{1,\ldots,n\}$. Recall from Section \ref{Secprelim} that the $\nn$- and $\nnn$-graded Betti numbers corresponding to the Stanley-Reisner ideal $I_{M}$ are independent of the choice of the underlying field $\field{k}$. The only thing of importance, and thus our only assumption, is that the $\nn$-graded (or $\nnn$-graded) minimal free resolution of $I_{M}$ is constructed with respect to the same field as the reduced chain complex over $M$. We may therefore omit specifying a field. Recall also that $\elo{M}{l}$ denotes the elongation of $M$ to rank $r(M)+l$.

Throughout the rest of this article we shall employ the convention that $\Bnum{i}{j}{\elo{M}{l}}=0$ whenever $l\notin[0,n-r(M]$.
\begin{theorem}[Main result]\label{main}For each $1\leq j\leq n$ the coefficient of $Z^l$ in $P_{M,j}$ is equal to \[\sum_{i=0}^{n}(-1)^{i}\Big(\Bnum{i}{j}{\elo{M}{l-1}}-\Bnum{i}{j}{\elo{M}{l}}\Big).\] 
\end{theorem}
\begin{proof}
Let $s_{\sigma,l}$ denote the coefficient of $Z^l$ in $P_{M_{|\sigma},|\sigma|}$. Since \[P_{M,j}(Z)=\sum_{|\sigma|=j}P_{M_{|\sigma},|\sigma|}(Z),\] the coefficient of $Z^l$ in $P_{M,j}(Z)$ is $\sum_{|\sigma|=j}s_{\sigma,l}$. On the other hand, we have \[s_{\sigma,l}=(-1)^{|\sigma|}\sum_{\substack{\gamma\subseteq \sigma\\n_{M}(\gamma)=l}}(-1)^{|\gamma|}=(-1)^{|\sigma|}\Big[\sum_{\substack{\gamma\subseteq \sigma\\n_{\elo{M}{l}}(\gamma)=0}}(-1)^{|\gamma|}-\sum_{\substack{\gamma\subseteq \sigma\\n_{\elo{M}{l-1}}(\gamma)=0}}(-1)^{|\gamma|}\Big].\] 
Applying Theorems \ref{BjoLemma} and \ref{Hochster}, in combination with (\ref{switch}), we see that \begin{flalign*}
(-1)^{|\sigma|}\sum_{\gamma\subseteq \sigma, n_{\elo{M}{l}}(\gamma)=0}(-1)^{|\gamma|}&=(-1)^{n_{\elo{M}{l}}(\sigma)}\dim H_{r_{\elo{M}{l}}(\sigma)}(M_{(l)|\sigma})\\
&=(-1)^{n_{\elo{M}{l}}(\sigma)}\Bb{n_{\elo{M}{l}}(\sigma)-1}{\sigma}(I_{M_{(l)|\sigma}}),
\end{flalign*}
which is equal to $(-1)^{n_{\elo{M}{l}}(\sigma)}\Bnum{n_{\elo{M}{l}}(\sigma)-1}{\sigma}{\elo{M}{l}}$ -- since, in general, $\Bb{i}{\sigma}(\Delta)=\Bb{i}{\sigma}(\Delta_{|\sigma})$. 

Thus \begin{flalign*}
s_{\sigma,l}&=(-1)^{n_{\elo{M}{l}}(\sigma)}\Bnum{n_{\elo{M}{l}}(\sigma)-1}{\sigma}{\elo{M}{l}}-(-1)^{n_{\elo{M}{l-1}}(\sigma)}\Bnum{n_{\elo{M}{l-1}}(\sigma)-1}{\sigma}{\elo{M}{l-1}}\\
&=(-1)^{n_{\elo{M}{l-1}}(\sigma)-1}\Bnum{n_{\elo{M}{l-1}}(\sigma)-1}{\sigma}{\elo{M}{l-1}}-(-1)^{n_{\elo{M}{l}}(\sigma)-1}\Bnum{n_{\elo{M}{l}}(\sigma)-1}{\sigma}{\elo{M}{l}},\end{flalign*}
which by (\ref{eple}) is equal to \[\sum_{i=0}^{n}(-1)^{i}\Bnum{i}{\sigma}{\elo{M}{l-1}}-\sum_{i=0}^{n}(-1)^{i}\Bnum{i}{\sigma}{\elo{M}{l}}.\] Consequently, the coefficient of $Z^l$ in $P_{M,j}(Z)$ is \begin{flalign*}\sum_{|\sigma|=j}\left(\sum_{i=0}^{n}(-1)^{i}\Big(\Bnum{i}{\sigma}{\elo{M}{l-1}}-\Bnum{i}{\sigma}{\elo{M}{l}}\Big)\right)&=\sum_{i=0}^{n}(-1)^{i}\left(\sum_{|\sigma|=j}\Bnum{i}{\sigma}{\elo{M}{l-1}}-\sum_{|\sigma|=j}\Bnum{i}{\sigma}{\elo{M}{l}}\right)\\&=\sum_{i=0}^{n}(-1)^{i}\Big(\Bnum{i}{j}{\elo{M}{l-1}}-\Bnum{i}{j}{\elo{M}{l}}\Big).\end{flalign*} 
\end{proof}
\begin{example}[Continuation of Ex.~\ref{RunEx2}]\label{RunEx3}
Let us calculate $P_{M(H),5}(Z)$ using Theorem \ref{main}. Having already found the $\nn$-graded Betti numbers of $M(H)$ and its elongations, we easily calculate 
\begin{flalign*} P_{M(H),5}(Z)=&\Bnum{0}{5}{\elo{M(H)}{1}}Z^{2}\\
&+\Big(\big(-\Bnum{1}{5}{M(H)}\big)-\big(\Bnum{0}{5}{\elo{M(H)}{1}}\big)\Big)Z\\
&-\big(-\Bnum{1}{5}{M(H)}\big)\\
=&15Z^{2}+\big((-28)-15\big)Z-(-28).
\end{flalign*} Continuing like this, we find the complete set of weight polynomials:
\begin{flalign*}&P_{M(H),0}(Z)=1\\&P_{M(H),1}(Z)=0\\&P_{M(H),2}(Z)=Z - 1&\\&P_{M(H),3}(Z)=6Z - 6\\&P_{M(H)4}(Z)=2Z^2 - Z - 1\\&P_{M(H),5}(Z)=15Z^2 - 43Z + 28\\&P_{M(H),6}(Z)=7Z^3 - 36Z^2 + 60Z - 31\\&P_{M(H),7}(Z)=Z^4 - 7Z^3 + 19Z^2 - 23Z + 10.\end{flalign*} 
\end{example}

\begin{corollary}\label{Wpol}Let $C$ be a linear code over $\Ff{q}$ of length $n$, with parity check matrix $H$. For $1\leq m\leq n$ and $Q$ a power of $q$, we have
\[A_{C,m}(Q)=\sum_{l=0}^{n}\left(\sum_{i=0}^{n}(-1)^{i}\Big(\Bnum{i}{m}{\elo{M(H)}{l-1}}-\Bnum{i}{m}{\elo{M(H)}{l}}\Big)\right)Q^{l}.\] 
\end{corollary}
\begin{proof}
This is immediate from Theorem \ref{main}, since $A_{C,m}(Q)=P_{M(H),m}(Q)$ by Proposition \ref{forkoder}. 
\end{proof}
In light of Corollary \ref{Wpol}, the polynomials found in Example \ref{RunEx3}, when evaluated at $q^{r}$, determine the number of codewords of a given weight in $C\otimes_{\Ff{q}}\Ff{q^{r}}$.

Occasionally, the result of Corollary \ref{Wpol} can greatly simplify the task of calculating weight polynomials $A_{C,m}(Q)$ for a linear code $C$. This is for instance the case with MDS-codes:
\begin{example}
Let $C$ be an MDS $[n,k]$-code over $\field{F}_{q}$, with parity check matrix $H$. It is well known that $M(H)$ is the uniform matroid $U(s,n)$, where $s=n-k$; which of course implies that \[\elo{M(H)}{l}=U(s+l,n).\]
From e.g.~\cite[Example 3]{JV}, we see that \[\Bnum{i}{j}{\elo{M(H)}{l}}=\begin{cases}
\binom{j-1}{s+l}\binom{n}{j}, i=j-l-s-1.\\
0, \text{otherwise}.\\
\end{cases}\]
We conclude from Corollary \ref{Wpol} that for $1\leq m\leq n$, and $Q=q^{r}$, we have \begin{flalign*}A_{C,m}(Q)=&\sum_{l=1}^{n}(-1)^{m+l+s}\binom{n}{m}\left(\binom{m-1}{s+l-1}+\binom{m-1}{s+l}\right)Q^{l}+(-1)^{m+s}\binom{n}{m}\binom{m-1}{s}\\=&(-1)^{m+s}\binom{n}{m}\left(\sum_{l=1}^{n}(-1)^{l}\binom{m}{s+l}Q^{l}+\binom{m-1}{s}\right).\end{flalign*}
\end{example}

\subsection{Further results}
The generalized weight polynomials of $\elo{M}{k-1}$ determine those of $\elo{M}{k}$ for all $1\leq k\leq n-r(M)$.
\begin{proposition}\label{jan2}
Let $k\geq 1$. If \[P_{\elo{M}{k-1},j}(Z)=a_{n}Z^{n}+a_{n-1}Z^{n-1}+\cdots+a_{1}Z+a_{o},\] then \[P_{\elo{M}{k},j}(Z)=a_{n}Z^{n-1}+a_{n-1}Z^{n-2}+\cdots+a_{2}Z+(a_{1}+a_{o}).\] 
\end{proposition}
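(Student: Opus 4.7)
The plan is to pass from $M_{k-1}$ to $M_k$ via the identity $M_k=(M_{k-1})_1$ and then simply track how the nullity function changes, reading off the effect on the GWP coefficient by coefficient.

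First, I would verify that $M_k=(M_{k-1})_1$. The independent sets of $(M_{k-1})_1$ are, by definition, the subsets $\sigma\subseteq E$ with $n_{M_{k-1}}(\sigma)\le 1$, and by (\ref{ni}) this is equivalent to $n_M(\sigma)\le k$, which is precisely the criterion for $\sigma$ to be independent in $M_k$. Applying Proposition \ref{ni1} to the elongation of $M_{k-1}$ by one step then yields
\[
n_{M_k}(\gamma)=\begin{cases} n_{M_{k-1}}(\gamma)-1, & n_{M_{k-1}}(\gamma)\ge 1,\\ 0, & n_{M_{k-1}}(\gamma)=0.\end{cases}
\]

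Next, I would compare coefficients of $Z^l$ on the two sides. Directly from the definition of GWP,
\[
[Z^l]\,P_{M',j}(Z)=(-1)^j\sum_{|\sigma|=j}\sum_{\substack{\gamma\subseteq\sigma\\ n_{M'}(\gamma)=l}}(-1)^{|\gamma|}.
\]
Apply this to $M'=M_k$ and use the nullity formula above. For $l\ge 1$, the condition $n_{M_k}(\gamma)=l$ is equivalent to $n_{M_{k-1}}(\gamma)=l+1$, so
\[
[Z^l]\,P_{M_k,j}(Z)=[Z^{l+1}]\,P_{M_{k-1},j}(Z)=a_{l+1}.
\]
For $l=0$, the condition $n_{M_k}(\gamma)=0$ is equivalent to $n_{M_{k-1}}(\gamma)\in\{0,1\}$, and splitting the sum according to the two cases gives
\[
[Z^0]\,P_{M_k,j}(Z)=[Z^0]\,P_{M_{k-1},j}(Z)+[Z^1]\,P_{M_{k-1},j}(Z)=a_0+a_1.
\]
Collecting these contributions yields exactly the stated polynomial $a_n Z^{n-1}+\cdots+a_2Z+(a_1+a_0)$.

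There is no real obstacle here beyond the minor bookkeeping observation that $M_k$ arises as a single elongation of $M_{k-1}$, allowing Proposition \ref{ni1} to be reused one step at a time; once that is in place the proof is a direct comparison of coefficients in the defining sum of the GWP, with no appeal to Betti numbers or homology needed.
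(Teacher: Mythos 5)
Your proof is correct and follows essentially the same route as the paper: a coefficient-by-coefficient comparison of the defining sums, using Proposition \ref{ni1} to translate the nullity conditions, with the $l\geq 1$ case shifting indices and the $l=0$ case splitting into the two subcases $n_{M_{k-1}}(\gamma)=0$ and $n_{M_{k-1}}(\gamma)=1$. The only cosmetic difference is that you phrase the nullity relation via the identity $M_k=(M_{k-1})_1$, whereas the paper applies Proposition \ref{ni1} to relate both $n_{M_k}$ and $n_{M_{k-1}}$ directly to $n_M$; the substance is identical.
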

\begin{proof}
Let $s_{\sigma,l}^{(k)}$ denote the coefficient of $Z^{l}$ in $P_{\elo{M}{k}|_{\sigma},|\sigma|}$. As noted in the proof of Theorem \ref{main}, the coefficient of $Z^{l}$ in $P_{\elo{M}{k},j}$ is $\sum_{|\sigma|=j}s_{\sigma,l}^{(k)}$, and \[s_{\sigma,l}^{(k)}=(-1)^{|\sigma|}\sum_{\substack{\gamma\subseteq \sigma\\n_{\elo{M}{k}}(\gamma)=l}}(-1)^{|\gamma|}.\] 

Assume first that $l\geq1$. By Proposition \ref{ni1} we have, \begin{flalign*}s_{\sigma,l}^{(k)}&=(-1)^{|\sigma|}\sum_{\substack{\gamma\subseteq \sigma\\n_{\elo{M}{k}}(\gamma)=l}}(-1)^{|\gamma|}\\&=(-1)^{|\sigma|}\sum_{\substack{\gamma\subseteq \sigma\\n_{\elo{M}{k-1}}(\gamma)=l+1}}(-1)^{|\gamma|}\\&=s_{\sigma,l+1}^{(k-1)}.\end{flalign*} Finally, by Proposition \ref{ni1} again, we see that \begin{flalign*}s_{\sigma,0}^{(k)}&=(-1)^{|\sigma|}\sum_{\substack{\gamma\subseteq \sigma\\n(\gamma)\leq k}}(-1)^{|\gamma|}\\&=(-1)^{|\sigma|}\sum_{\substack{\gamma\subseteq \sigma\\n(\gamma)= k}}(-1)^{|\gamma|}+(-1)^{|\sigma|}\sum_{\substack{\gamma\subseteq \sigma\\n(\gamma)\leq k-1}}(-1)^{|\gamma|}\\&=(-1)^{|\sigma|}\sum_{\substack{\gamma\subseteq \sigma\\n_{\elo{M}{k-1}}(\gamma)=1}}(-1)^{|\gamma|}+(-1)^{|\sigma|}\sum_{\substack{\gamma\subseteq \sigma\\n_{\elo{M}{k}}(\gamma)=0}}(-1)^{|\gamma|}\\&=s_{\sigma,1}^{(k-1)}+s_{\sigma,0}^{(k-1)},\end{flalign*} and this concludes our proof.	
\end{proof}

Combining Propositions \ref{jan2} and \ref{gwpbestemmerdi}, we see that
\begin{corollary}\label{vekterMi}
\[d_{i}(\elo{M}{l+1})=d_{i+1}(\elo{M}{l}).\]
\end{corollary}

\begin{example}[The simplex code $\mathcal{S}_{2}(3)$]
Let $\mathcal{S}_{2}(3)$ be the simplex code of dimension $3$ over $\Ff{2}$. This code has length $n=7$. Let $H$ be a parity-check matrix of $\mathcal{S}_{2}(3)$. 

The higher weights of $\mathcal{S}_{2}(3)$ are $(d_{1},d_2,d_3)=(4,6,7)$, from which it follows by way of \cite[Theorem 2]{JV2} that the non-zero Betti numbers of $I_{M(H)}$ are \[(\beta_{0,4},\beta_{1,6},\beta_{2,7})=(7,14,8).\] By Proposition \ref{vekterMi}, the higher weights of $\elo{M}{1}$ are $(d_1,d_2)=(6,7)$, which implies that $\elo{M(H)}{1}$ must be the uniform matroid $U(5,7)$. From \cite[Example 3]{JV} then, we see that the only non-zero Betti numbers of $I_{\elo{M(H)}{1}}$ are $\Bnum{0}{6}{\elo{M(H)}{1}}=7$ and $\Bnum{1}{7}{\elo{M(H)}{1}}=6$. As always, the $(n-r(M(H))-1)^{th}$ elongation $\elo{M(H)}{2}$ has $\{1,\ldots,7\}$ as its only circuit, such that the only non-zero Betti number associated with $I_{\elo{M(H)}{2}}$ is $\Bnum{0}{7}{\elo{M(H)}{2}}=1$. 

Having found all $\nn$-graded Betti numbers from all elongations, we easily calculate the weight polynomials using Corollary \ref{Wpol}:	
\begin{flalign*} 
&A_{\mathcal{S}_{2}(3),0}(Q)=1\\
&A_{\mathcal{S}_{2}(3),1}(Q)=0\\
&A_{\mathcal{S}_{2}(3),2}(Q)=0\\
&A_{\mathcal{S}_{2}(3),3}(Q)=0\\
&A_{\mathcal{S}_{2}(3),4}(Q)=7Q-7\\
&A_{\mathcal{S}_{2}(3),5}(Q)=0\\
&A_{\mathcal{S}_{2}(3),6}(Q)=7Q^2-21Q+14\\
&A_{\mathcal{S}_{2}(3),7}(Q)=Q^3-7Q^2+14Q-8\\
\end{flalign*}
\end{example}

\section{Concerning the converse}\label{Counter}
Having seen that the Betti numbers associated with the elongations $\elo{M}{i},0\leq i\leq n-r(M)$, determine the polynomials $P_{M,j}(Z),1\leq j\leq n$, it is natural to ask whether the opposite is true. The answer to this is negative, as the following counterexample shows:
\begin{example}[Continuation of Ex.~\ref{RunEx3}]\label{RunEx4}
Let $N$ be the matroid on $\{1,\ldots,7\}$ with bases 
\begin{flalign*}B(N)=\big\{&\{ 1, 4, 7 \},\{ 1, 3, 6 \},\{ 1, 3, 5 \},\{ 1, 3, 4 \},\{ 2, 3, 6 \},\{ 3, 4, 7 \},\{ 1, 2, 5 \},\{ 1, 5, 7 \},\\ &\{ 3, 6, 7 \},\{ 2, 4, 7 \},\{ 3, 5, 6 \},\{ 2, 3, 4 \}, \{ 1, 2, 3 \},\{ 1, 2, 7 \},\{ 1, 5, 6 \},\{ 3, 4, 5 \},\\ &\{ 1, 6, 7 \},\{ 1, 4, 5 \},\{ 2, 3, 7 \},\{ 2, 5, 6 \},\{ 2, 4, 5 \},\{ 3, 5, 7 \},\{ 2, 6, 7 \},\{ 2, 5, 7 \}\big\}.\end{flalign*} The Stanley-Reisner ideal of $N$ has minimal free resolution
\[\minCDarrowwidth5pt\begin{CD}
0@<<<S(-2)\oplus S(-3)^6\oplus S(-4)^5@<<<S(-4)^4\oplus S(-5)^{28}@<<<S(-6)^{31}@<<<S(-7)^{10}@<<<0\minCDarrowwidth5pt\end{CD}.\] Comparing to the minimal free resolution of $I_{M(H)}$, we see that the Betti numbers are not the same. However, it is easy to see, using Proposition \ref{main}, that $N$ has the same generalized weight polynomials as $M(H).$ 

Note that this is the ``smallest'' counterexample, in that there are no counterexamples for $n<7$. 
\end{example}

Moreover, knowing the Betti numbers of $M$ is in itself \emph{not} enough to calculate $P_{M,j}$ -- in general we need the Betti numbers derived from the elongations $\elo{M}{i}$ as well:
\begin{example}
The matroids $M$ and $N$ on $\{1,\ldots, 8\}$ with bases 
\begin{flalign*}B(M)=\big\{
        &\{ 1, 3, 4, 6, 7 \},
        \{ 1, 2, 3, 6, 8 \},
        \{ 1, 2, 3, 4, 8 \},
        \{ 1, 2, 3, 5, 8 \},
        \{ 1, 2, 5, 6, 8 \},
        \{ 1, 2, 3, 4, 7 \},\\
        &\{ 1, 2, 3, 5, 7 \},
        \{ 1, 2, 5, 6, 7 \},
        \{ 1, 3, 4, 5, 7 \},
        \{ 1, 3, 4, 6, 8 \},
        \{ 1, 2, 4, 6, 8 \},
        \{ 1, 2, 4, 6, 7 \},\\
        &\{ 1, 3, 4, 5, 8 \},
        \{ 1, 2, 4, 5, 7 \},
        \{ 1, 4, 5, 6, 7 \},
        \{ 1, 2, 3, 6, 7 \},
        \{ 1, 3, 5, 6, 7 \},
        \{ 1, 4, 5, 6, 8 \},\\
        &\{ 1, 3, 5, 6, 8 \},
        \{ 1, 2, 4, 5, 8 \}\big\}
\end{flalign*} and 
\begin{flalign*}B(N)=\big\{
&\{ 1, 3, 4, 6, 7 \},
        \{ 1, 2, 3, 4, 8 \},
        \{ 1, 2, 3, 5, 8 \},
        \{ 1, 2, 5, 6, 8 \},
        \{ 1, 2, 3, 4, 7 \},
        \{ 1, 2, 3, 5, 7 \},\\
        &\{ 1, 2, 5, 6, 7 \},
        \{ 1, 3, 4, 5, 7 \},
        \{ 1, 3, 4, 6, 8 \},
        \{ 1, 2, 4, 6, 8 \},
        \{ 1, 2, 4, 6, 7 \},
        \{ 1, 3, 4, 5, 8 \},\\
        &\{ 1, 2, 4, 5, 7 \},
        \{ 1, 3, 4, 5, 6 \},
        \{ 1, 2, 4, 5, 6 \},
        \{ 1, 3, 5, 6, 7 \},
        \{ 1, 2, 3, 5, 6 \},
        \{ 1, 2, 3, 4, 6 \},\\
        &\{ 1, 3, 5, 6, 8 \},
        \{ 1, 2, 4, 5, 8 \}
        \big\},
\end{flalign*} respectively, both have \[\minCDarrowwidth11pt\begin{CD}
0@<<<S(-2)\oplus S(-4)^5@<<<S(-5)^4\oplus S(-6)^5@<<<S(-7)^4@<<<0\minCDarrowwidth11pt\end{CD}\] as the minimal free resolution of their associated Stanley-Reisner ideal, while \[P_{M,4}(Z)=Z^2 - 5Z + 4\neq2Z^2 - 6Z + 4 =P_{N,4}(Z).\] 
Again this is the ``smallest'' counterexample.
\end{example}

It is however possible for two non-isomorphic matroids to have identical $\nn$-graded Betti numbers in all elongation levels (the smallest example of which is given by a pair of rank $3$ on $\{1,\ldots,6\}$).

\end{document}